\documentclass[12pt,a4paper]{article}

\usepackage{amssymb, amsmath, amsthm}

\usepackage[cm]{fullpage}
\usepackage[english]{babel}
\usepackage[cp1250]{inputenc}
\usepackage[T1]{fontenc}
\usepackage[pdftex]{graphicx}
\usepackage{booktabs}
\usepackage{multirow}


\newtheorem{thm}{Theorem}
\newtheorem{lem}{Lemma}
\newtheorem{prop}{Proposition}

\newtheorem{cor}{Corollary} 

\title{Numerical method for the time-fractional porous medium equation}
\author{\L ukasz P\l ociniczak\thanks{Faculty of Pure and Applied Mathematics, Wroc{\l}aw University of Science and Technology, Wyb. Wyspia{\'n}skiego 27, 50-370 Wroc{\l}aw, Poland}$\;^,$\footnote{Email: lukasz.plociniczak@pwr.edu.pl}}
\date{}

\begin{document}
\maketitle
	
\begin{abstract}	
	This papers deals with a construction and convergence analysis of a finite difference scheme for solving time-fractional porous medium equation. The governing equation exhibits both nonlocal and nonlinear behaviour making the numerical computations challenging. Our strategy is to reduce the problem into a single one-dimensional Volterra integral equation for the self-similar solution and then to apply the discretization. The main difficulty arises due to the non-Lipschitzian behaviour of the equation's nonlinearity. By the analysis of the recurrence relation for the error we are able to prove that there exists a family of finite difference methods that is convergent for a large subset of the parameter space. We illustrate our results with a concrete example of a method based on the midpoint quadrature. \\
		
	\noindent\textbf{Keywords}: porous medium equation, nonlinear diffusion, fractional derivative, finite difference method, Volterra equation 
\end{abstract}

\section{Introduction}

In our previous investigations \cite{Plo13,Plo14,Plo15} we have considered the following time-fractional nonlinear diffusion equation (also known as a time-fractional porous medium equation)
\begin{equation}
\partial^\alpha_t u = \left(u^m u_x\right)_x, \quad 0<\alpha< 1, \quad m>1,
\label{eqn:DiffEqPDE}
\end{equation}
where the fractional derivative is of the Riemann-Liouville type
\begin{equation}
\partial^\alpha_t u(x,t) = \frac{1}{\Gamma(1-\alpha)}\frac{\partial}{\partial t}\int_0^t (t-s)^{-\alpha} u(x,s) ds.
\end{equation}
The boundary conditions that we impose are the following
\begin{equation}
u(x,0) = 0, \quad u(0,t) = 1, \quad x>0, \quad t>0,
\label{eqn:CondPDE}
\end{equation}
where the nondimensionalization has been implicitly assumed. The above problem is a description of an experiment where one measures the material properties of an essentailly one-dimensional half-infinite medium under the water imbibition from the boundary (for a experimental results see \cite{El04,Ram08,Kun01,De06,Zho17}). 

There is a wealth of literature concerning numerical methods for fractional differential equations (both ODEs and PDEs) - some surveys can be found in \cite{Die02,Bal12}. Concerning the linear fractional diffusion the Reader can find relevant methods for example in \cite{Tad06,Li09,Yus05}. The numerical methods for the main operator in the spatial diffusion, namely the fractional Laplacian, can be found in \cite{Hua04,Cus18}. As for the nonlinear version of the anomalous diffusion, an interesting paper recently appeared  in which the authors constructed a multi-grid waveform method of fast convergence \cite{Gas17}. Moreover, a similar equation to ours has been solved in \cite{Awo16} in the context of petroleum industry. Some other numerical approaches concern space-fractional nonlinear diffusion \cite{Erv07,Del14}, nonlinear source terms \cite{Zhu09,Zen14,Bhr16} and variable order diffusion \cite{Mog17}. 

Our approach is based on a transformation of the governing PDE to the equivalent nonlinear Volterra equation (for a recent survey of theory and numerical methods see \cite{Bru17}). In that case the classical convergence theory cannot be applied since the nonlinearity of the equation is non-Lipschitzian. According to our best knowledge, there is a scarce literature concerning similar problems. A very interesting paper is \cite{Buc97} where an iterative technique has been applied and convergence proofs given. Moreover, in \cite{Cor00} a short summary of the theoretical and numerical character has been published. Lastly, we mention our own work \cite{Plo17} from which the present considerations stem where we have given the convergence proof assuming the kernel's separation from zero. In the present discussion we relax this assumption. 

The paper is structured as follows. In the second section we formulate the problem in terms of the Volterra setting starting from the self-similar form of the time-fractional porous medium equation. We proceed to the main results in the third section where a construction of a convergent finite difference method is given. We end the paper with some numerical simulations illustrating our results. 

\section{Problem statement}

The straightforward numerical approach is to start with the equation (\ref{eqn:DiffEqPDE}). This has been done for example in \cite{Plo14} (but also see the recent approach in \cite{Gas17}) but Authors noted a very large computational cost and stability issues caused by the interplay of two factors: nonlocality (of the fractional derivarive) and nonlinearity (of the flux). A more sensible method is to transform (\ref{eqn:DiffEqPDE}) into its self-similar form and then to derive an appropriate numerical scheme. We begin by necessary preparations. 

If we substitute $\eta = x t^{-\alpha/2}$ for some $0<\alpha< 1$ and denote $u(x,t)=U(x t^{-\alpha/2})$ we arrive at
\begin{equation}
\left(U^m U'\right)' = \left[(1-\alpha)-\frac{\alpha}{2}\eta \frac{d}{d\eta}\right]I^{0, 1-\alpha}_{-\frac{2}{\alpha}} U, \quad 0<\alpha< 1,
\label{eqn:mainEq}
\end{equation} 
with the boundary conditions
\begin{equation}
U(0) = 1, \quad U(\infty) = 0.
\label{eqn:mainEqBC}
\end{equation}
Here, $I^{a,b}_c$ is the Erd\'elyi-Kober operator \cite{Kir93,Kir97,Sne75}
\begin{equation}
I^{a,b}_c U(\eta) = \frac{1}{\Gamma(b)}\int_0^1 (1-s)^{b-1}s^a U(s^\frac{1}{c}\eta)ds.
\label{eqn:EK}
\end{equation}
Notice that (\ref{eqn:mainEq}) is an ordinary integro-differential equation which should be more tractable for numerical work than (\ref{eqn:DiffEqPDE}). 

There is a one more transformation that can be done in order to simplify the matters even more. Notice that due to (\ref{eqn:mainEqBC}) our problem has a free-boundary which can cause difficulties to resolve numerically (for some details see \cite{Cra87}). However, there exists a substitution that can take (\ref{eqn:mainEq}) into an equivalent initial-value problem. Physical situation as well as theoretical considerations \cite{Atk71,Plo17a,Plo18} suggest that the solution of (\ref{eqn:mainEq}) has a compact support, i.e. there exists a point $\eta^* \geq 0$ such that
\begin{equation}
	U(\eta) = 0 \quad \text{for} \quad \eta\geq \eta^*.
	\label{eqn:eta*}
\end{equation}
This simply means that the wetting front propagates at a finite speed. Now, we can substitute
\begin{equation}
U(\eta)=\left(m (\eta^*)^2 \right)^\frac{1}{m} y(z), \quad z=1-\frac{\eta}{\eta^*},
\label{eqn:transformation}
\end{equation}
which changes (\ref{eqn:mainEq}) into
\begin{equation}
m(y^m y')' =\left[(1-\alpha)+\frac{\alpha}{2}(1-z)\frac{d}{dz}\right]F_{\alpha}y, \quad 0<\alpha<1, \quad 0\leq z\leq 1.
\label{eqn:mainEq2}
\end{equation}
where the linear operator $F_{\alpha}$ is defined by
\begin{equation}
F_{\alpha}y(z):= \frac{1}{\Gamma(1-\alpha)}\int_{(1-z)^\frac{\alpha}{2}}^{1} (1-s)^{-\alpha} y(1-s^{-\frac{2}{\alpha}}(1-z)) ds.
\label{eqn:F}
\end{equation}
Further, the same transformation yields the formula for wetting front position
\begin{equation}
\eta^* = \frac{1}{\sqrt{m y(1)^m}}.
\end{equation}
Lastly, equation (\ref{eqn:mainEq2}) can be integrated twice and transformed into a Volterra integral equation
\begin{equation}
y(z)^{m+1} = \frac{m+1}{m}\int_0^z \left(\frac{\alpha}{2}+\left(1-\frac{\alpha}{2}\right)z-t\right) F_\alpha y(t)dt,
\label{eqn:FixedPoint}
\end{equation}
which is of the main interest for this paper. It is a Volterra equation in which the nonlinear term in non-Lipschitz (it can be seen by introducing a new function $u=y^{1+m}$).

We can manipulate the integrand of (\ref{eqn:FixedPoint}) in order to explicitly write the kernel. This would simplify the subsequent numerical implementation. To start, change the variable $u=1-s^{-\frac{2}{\alpha}}(1-z)$ in the definition of $F_\alpha$ given by (\ref{eqn:F})
\begin{equation}
	F_\alpha y(t)  = \frac{\alpha}{2} \frac{(1-t)^\frac{\alpha}{2}}{\Gamma(1-\alpha)}\int_0^t \left(1-\left(\frac{1-z}{1-u}\right)^\frac{\alpha}{2}\right)^{-\alpha} \left(1-u\right)^{-\frac{\alpha}{2}-1}y(u)du.
\end{equation}
Now, plugging the above formula into (\ref{eqn:FixedPoint}) and changing the order of integration we obtain
\begin{equation}
	y(z)^{m+1} = \frac{m+1}{m}\frac{\alpha}{2} \frac{1}{\Gamma(1-\alpha)} \int_0^z \left[\int_u^z\left(\frac{\alpha}{2}+\left(1-\frac{\alpha}{2}\right)z-t\right)(1-t)^\frac{\alpha}{2}\left(1-\left(\frac{1-t}{1-u}\right)^\frac{\alpha}{2}\right)^{-\alpha}dt\right]\frac{y(u)}{(1-u)^{\frac{\alpha}{2}+1}}du.
\end{equation}
To simplify further we can substitute $s = \left(\frac{1-t}{1-u}\right)^\frac{\alpha}{2}$ and arrive at
\begin{equation}
y(z)^{m+1} = \int_0^z \left[\underbrace{\frac{m+1}{m}\frac{1}{\Gamma(1-\alpha)}\int_{\left(\frac{1-z}{1-u}\right)^\frac{\alpha}{2}}^1\left(s^\frac{2}{\alpha}(1-u)-\left(1-\frac{\alpha}{2}\right)(1-z)\right)s^\frac{2}{\alpha}\left(1-s\right)^{-\alpha}}_{K(z,u)}\right]y(u)du.
\label{eqn:kernel}
\end{equation}
Finally, we can notice the kernel $K$ can be written in terms of the Incomplete Beta Function defined by
\begin{equation}
	B(x,a,b) := \int_0^x (1-t)^{a-1} t^{b-1} dt.
\end{equation}
Hence,
\begin{equation}
	y(z)^{m+1} = \int_0^z K(z,u) y(u) du,
\label{eqn:mainEqInt}
\end{equation}
where for $0\leq u\leq z$ we have
\begin{equation}
\begin{split}
	K(z,u) &:= \frac{m+1}{m}\frac{1}{\Gamma(1-\alpha)} \left[(1-u)\left(B\left(\frac{4}{\alpha}+1,1-\alpha\right)-B\left(\left(\frac{1-z}{1-u}\right)^\frac{\alpha}{2},\frac{4}{\alpha}+1,1-\alpha\right)\right)
	\right. \\
	&\left. -\left(1-\frac{\alpha}{2}\right)(1-z)\left(B\left(\frac{2}{\alpha}+1,1-\alpha\right)-B\left(\left(\frac{1-z}{1-u}\right)^\frac{\alpha}{2},\frac{2}{\alpha}+1,1-\alpha\right)\right)
	\right].
\end{split}
\label{eqn:kernelExp}
\end{equation}
It is evident that the kernel is positive and continuous with a singular derivative. A class of equations similar to the above has been a subject of very active investigations. For important results stating the existence and uniqueness the Reader is invited to consult \cite{Bus76,Gri81,Okr89}.

\section{Finite difference scheme}

Let us begin with introducing the grid
\begin{equation}
	z_n := \frac{n}{N}, \quad h:=\frac{1}{N}, \quad n=0,1,...,N.
\end{equation}
Following our previous investigations \cite{Plo17} we discretize the integral (\ref{eqn:mainEqInt})
\begin{equation}
\int_0^{z_n} K(z_n,t) y(t) dt = h \sum_{i=1}^{n-1} w_{n,i} K(z_n,z_i) y(z_i) + \delta_n(h),
\label{eqn:LocCons}
\end{equation}
where $\delta_n(h)$ is the local consistency error. Furthermore, define the maximal error
\begin{equation}
\delta(h) := \max_{1\leq n\leq N} |\delta_n(h)|.
\label{eqn:delta}
\end{equation}
For the weights, we assume their boundedness
\begin{equation}
	0<w_{n,i} \leq W.
\label{eqn:weights}
\end{equation}
We can propose the following finite difference scheme for solving (\ref{eqn:mainEqInt})
\begin{equation}
	y_n^{m+1} = h \sum_{i=1}^{n-1} w_{n,i} K_{n,i} \; y_i, \quad n=2,3,...,N,
\label{eqn:NumMet}
\end{equation} 
where $K_{n,i} := K(z_n,z_i)$. 

The objective of the following is to prove that (\ref{eqn:NumMet}) is convergent provided we choose an appropriate starting value $y_1$. From (\ref{eqn:kernelExp}) we notice that $K(1,1) = 0$ and thus the kernel is not separated from zero. Hence, we cannot use our previous results from \cite{Plo17}. However, several ingredients from our developing theory will be useful. The first is a simple consequence of the integral equation structure and discretization scheme.
\begin{prop}
	Let $y$ be the nontrivial solution of (\ref{eqn:mainEqInt}) and $y_n$ are constructed via the iteration (\ref{eqn:NumMet}). If $\delta_n(h)$ in (\ref{eqn:LocCons}) is non-negative (non-positive) for all $n=1,2,...,N$, then $y(n h) \geq y_n$ ($y(nh) \leq y_n$) provided that $y(h) \geq y_1$ ($y(h)\leq y_1$).
	\label{prop:Mono}
\end{prop}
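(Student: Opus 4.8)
The plan is to prove the claim by induction on $n$, exploiting the fact that the exact solution restricted to the grid obeys the same recurrence as the numerical iterates, up to the local consistency error. First I would evaluate the integral equation (\ref{eqn:mainEqInt}) at $z=z_n$ and insert the quadrature identity (\ref{eqn:LocCons}) to get
\begin{equation}
y(z_n)^{m+1} = h \sum_{i=1}^{n-1} w_{n,i} K_{n,i}\, y(z_i) + \delta_n(h).
\end{equation}
Subtracting the scheme (\ref{eqn:NumMet}) then produces the master identity
\begin{equation}
y(z_n)^{m+1} - y_n^{m+1} = h \sum_{i=1}^{n-1} w_{n,i} K_{n,i}\left(y(z_i) - y_i\right) + \delta_n(h),
\end{equation}
which isolates exactly the sign-determining contributions.

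Next I would treat the non-negative case ($\delta_n(h)\geq 0$ and $y(h)\geq y_1$); the non-positive case is identical with every inequality reversed. The base case $n=1$ is precisely the hypothesis $y(z_1)=y(h)\geq y_1$. For the inductive step, assume $y(z_i)\geq y_i$ for all $i\leq n-1$. Then every term on the right-hand side of the master identity is non-negative: the weights are positive by (\ref{eqn:weights}), the kernel values $K_{n,i}$ are positive because $K$ is positive on $0\leq u\leq z$ as observed after (\ref{eqn:kernelExp}), the differences $y(z_i)-y_i$ are non-negative by the induction hypothesis, and $\delta_n(h)\geq 0$ by assumption. Hence $y(z_n)^{m+1}\geq y_n^{m+1}$.

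The final step is to pass from the inequality between the $(m+1)$-st powers back to the desired inequality between $y(z_n)$ and $y_n$. Here I would use that both quantities are non-negative: the self-similar profile satisfies $0\leq y\leq 1$, while each iterate $y_i$ is the $(m+1)$-st root of a sum of non-negative terms (again by positivity of the weights and kernel together with an admissible $y_1\geq 0$), so non-negativity of the $y_i$ itself follows by a parallel induction. Since $t\mapsto t^{m+1}$ is strictly increasing on $[0,\infty)$ it may be inverted, yielding $y(z_n)\geq y_n$ and closing the induction. I do not expect a serious obstacle at any stage; the one point deserving care is that the comparison rests solely on monotonicity of $t\mapsto t^{m+1}$ and not on any Lipschitz estimate, so the non-Lipschitzian behaviour of the nonlinearity that complicates the subsequent convergence analysis is irrelevant here. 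The nontriviality hypothesis on $y$ serves only to select the physically relevant solution, excluding the degenerate branch $y\equiv 0$ for which the comparison would be vacuous.
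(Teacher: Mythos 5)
Your proof is correct. Note that the paper itself does not prove this proposition --- it only cites the earlier work \cite{Plo17} --- so your argument supplies what the text omits; the induction on $n$ via the identity $y(z_n)^{m+1}-y_n^{m+1} = h\sum_{i=1}^{n-1} w_{n,i}K_{n,i}\left(y(z_i)-y_i\right)+\delta_n(h)$, combined with positivity of the weights and kernel and the strict monotonicity of $t\mapsto t^{m+1}$ on $[0,\infty)$, is exactly the natural (and standard) route, and your care in noting that the iterates must be taken as non-negative $(m+1)$-st roots (so that $y_1\geq 0$ is implicitly required) is the one genuinely delicate point, handled correctly.
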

This monotonicity result has been proved in \cite{Plo17}. The second piece of information about the solution of (\ref{eqn:mainEqInt}) concerns the \emph{global} estimates (for the proofs of the next two lemmas see \cite{Plo17a}).
\begin{lem}
\label{lem:estimates}
	Let $y$ be the nontrivial solution of (\ref{eqn:mainEqInt}). Then, the following estimates take place
	\begin{equation}
		C_1 z^\frac{2-\alpha}{m} \leq y(z) \leq C_2 z^\frac{2-\alpha}{m}, \quad 0\leq z\leq 1, \quad 0<\alpha\leq 1,
	\end{equation}
	where $C_{1,2}$ are given by
	\begin{equation}
	\begin{array}{ll}
	C_1 = \left\{
	\begin{array}{ll}
	\left(\left(\frac{\alpha}{2}\right)^{1-\alpha} \frac{\Gamma\left(\frac{2-\alpha}{m}\right)}{\Gamma\left(2-\alpha+\frac{2-\alpha}{m}\right)} \frac{1}{2-\alpha+m(3-\alpha)}\right)^\frac{1}{m+1}, & 0<\alpha\leq 1-\frac{1}{m+1}; \\
	
	\left(\left(\frac{\alpha}{2}\right)^{2-\alpha} \frac{\Gamma\left(1+\frac{2-\alpha}{m}\right)}{\Gamma\left(2-\alpha+\frac{2-\alpha}{m}\right)} \frac{1}{2-\alpha}\right)^\frac{1}{m+1}, & 1-\frac{1}{m+1} < \alpha \leq 1,
	\end{array}
	\right.
	\vspace{12pt}\\ 
	C_2 = \Gamma(3-\alpha)^{-\frac{1}{m+1}}.
	\end{array}
	\label{eqn:Gammas}
	\end{equation}
\end{lem}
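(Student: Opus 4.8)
The plan is to establish the two-sided bound $C_1 z^{(2-\alpha)/m} \leq y(z) \leq C_2 z^{(2-\alpha)/m}$ by exploiting the precise structure of the Volterra equation (\ref{eqn:mainEqInt}) together with the known small-$z$ behaviour of its kernel. The guiding idea is that near $z=0$ the nonlinearity $y^{m+1}$ balances a single integration of the kernel against $y$, and since $y$ is expected to vanish like a power of $z$, one substitutes the ansatz $y(z) \sim C z^{\beta}$ and matches exponents. First I would determine $\beta$: inserting $y(z) = C z^\beta$ into $y^{m+1} = \int_0^z K(z,u) y(u)\,du$ and analysing the dominant contribution of $K(z,u)$ as $z,u \to 0$ should force $\beta(m+1) = \beta + (2-\alpha)$, i.e. $\beta = (2-\alpha)/m$, which is exactly the exponent appearing in the statement. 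This fixes the scaling and reduces the problem to pinning down the constants.

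\medskip

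Second, I would convert the heuristic scaling into rigorous inequalities. The natural tool is a comparison/bootstrap argument: assuming $y(u) \leq C_2 u^{(2-\alpha)/m}$ on $[0,z]$, substitute into the right-hand side of (\ref{eqn:mainEqInt}), evaluate the resulting kernel integral $\int_0^z K(z,u)\, u^{(2-\alpha)/m}\,du$, and show that it is bounded above by $\bigl(C_2 z^{(2-\alpha)/m}\bigr)^{m+1}$, so that taking the $(m+1)$-th root reproduces the upper bound and closes the induction on the scale of $z$. The lower bound is obtained symmetrically from the assumption $y(u) \geq C_1 u^{(2-\alpha)/m}$. The explicit constant $C_2 = \Gamma(3-\alpha)^{-1/(m+1)}$ and the two branches of $C_1$ in (\ref{eqn:Gammas}) should emerge from evaluating the leading-order kernel integral in closed form. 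To get this leading behaviour, I would expand $K(z,u)$ for small arguments: using the incomplete Beta representation (\ref{eqn:kernelExp}) and the asymptotics $B(x,a,b) \approx x^b/b$ as $x \to 0$, the kernel simplifies to a tractable power of $z$ and $u$, after which the integral $\int_0^z K(z,u) u^{\gamma}\,du$ reduces to a Beta-function moment $\int_0^1 (\cdot)\, du$ that produces the Gamma-quotients in (\ref{eqn:Gammas}).

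\medskip

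The main obstacle I anticipate is twofold. The first difficulty is controlling the kernel's behaviour uniformly rather than just asymptotically: the bound must hold on all of $[0,1]$, not merely near the origin, and $K(z,u)$ contains the factor $(1-((1-z)/(1-u))^{\alpha/2})^{-\alpha}$ whose integrable singularity at $u \to z$ must be handled carefully when estimating $\int_0^z K(z,u)\,u^\gamma\,du$. The second difficulty is the appearance of the two separate branches for $C_1$ at the threshold $\alpha = 1 - 1/(m+1)$; this dichotomy signals that the dominant contribution to the lower-bound integral switches between two competing terms in the kernel (the $(1-u)$ term versus the $(1-z)$ term in (\ref{eqn:kernelExp})) depending on the sign of some exponent, and extracting the sharp constant requires identifying which term wins in each regime. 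I would handle this by splitting the parameter range at exactly this threshold and optimizing the constant separately in each case, which is presumably where the careful bookkeeping of the referenced proof in \cite{Plo17a} is concentrated.
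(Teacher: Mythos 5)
The paper itself contains no proof of this lemma --- it defers explicitly to the preprint \cite{Plo17a} --- so your attempt can only be judged on its own merits, not against an in-paper argument. On those merits: your scaling analysis is correct (near the origin $K(z,u)\approx \frac{m+1}{m}\frac{(\alpha/2)^{2-\alpha}}{\Gamma(2-\alpha)}(z-u)^{1-\alpha}$, and matching exponents in $y^{m+1}=\int_0^z K y$ with $y\sim Cz^\beta$ does force $\beta m = 2-\alpha$), and sub/supersolution comparison is indeed the natural framework for equations of this Bushell--Okrasi\'nski type.

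The genuine gap is that the engine of your proof --- ``substitute the assumed bound into the right-hand side and close the induction on the scale of $z$'' --- is asserted rather than justified, and in the symmetric form you state it, it cannot be justified. The nonlinearity $y\mapsto y^{m+1}$ is non-Lipschitz at $0$ and the equation (\ref{eqn:mainEqInt}) admits the trivial solution $y\equiv 0$; hence there is no general comparison principle ``subsolution $\le$ solution'': the function $C_1 z^{(2-\alpha)/m}$ is a perfectly good subsolution, yet it does not lie below the solution $y\equiv 0$. Any correct proof of the lower bound must therefore use nontriviality in an essential, quantitative way (e.g.\ first proving that a nontrivial solution is strictly positive on $(0,1]$ with some crude power-type bound, and only then iterating upward); your plan never invokes nontriviality, so ``the lower bound is obtained symmetrically'' is exactly the step that fails. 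Even for the upper bound your induction has no base: both sides vanish at $z=0$, so there is no scale at which the hypothesis is known and from which it could propagate. The standard repair avoids induction-on-scale altogether: from global boundedness of the continuous solution, $y\le M$, one gets $y^{m+1}\le cMz^{2-\alpha}$, hence $y\le (cM)^{1/(m+1)}z^{(2-\alpha)/(m+1)}$, and re-inserting improves the exponent via the contraction $\beta\mapsto(2-\alpha+\beta)/(m+1)$, whose fixed point is $(2-\alpha)/m$, with the constants converging to the fixed point of $C\mapsto (cC)^{1/(m+1)}$. That iterative mechanism (and its mirror image for the lower bound, once positivity is secured), not the closed-form evaluation of Beta integrals, is where the real work of \cite{Plo17a} lies, and it is missing from your proposal.
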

Our solution is thus bounded from below and above by a power function of the same class. As it appears, something more detailed can be said about the behaviour of $y$ at the origin. 
\begin{lem}
	\label{thm:IC}
	The solution $y=y(z)$ of (\ref{eqn:mainEqInt}) satisfies
	\begin{equation}
	y(z) \sim \left(\frac{\alpha}{2}\right)^{2-\alpha} \frac{\Gamma\left(\frac{2-\alpha}{m}\right)}{\Gamma\left(1-\alpha+\frac{2-\alpha}{m}\right)} \frac{z^\frac{2-\alpha}{m}}{(2-\alpha)\left(1+\frac{1}{m}\right)-1} \quad \text{as} \quad z\rightarrow 0^+, \quad 0<\alpha\leq 1.
	\label{eqn:Asym}
	\end{equation}
\end{lem}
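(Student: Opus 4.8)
The plan is to read the statement as a sharpening of Lemma~\ref{lem:estimates}: that lemma already fixes the exponent and tells us the normalised quotient $\phi(z):=y(z)\,z^{-(2-\alpha)/m}$ is trapped between the positive constants $C_1,C_2$ on $(0,1]$, so the whole task reduces to proving that $\phi(z)$ \emph{converges} as $z\to0^+$ and to identifying its limit. Writing $\beta:=(2-\alpha)/m$, I would carry this out on the form (\ref{eqn:FixedPoint}) rather than on (\ref{eqn:mainEqInt}), because there the non-Lipschitz power $y^{m+1}$ stands alone on the left while the right-hand side is assembled from the \emph{linear} operator $F_\alpha$, which keeps the leading-order bookkeeping transparent.

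The formal computation of the constant comes first. Starting from (\ref{eqn:F}) and rescaling the Abel-type weight $(1-s)^{-\alpha}$ about $s=1$, one shows that a power input $y(u)=A\,u^{\beta}\big(1+o(1)\big)$ produces $F_\alpha y(t)=c_\alpha\,A\,t^{\,1-\alpha+\beta}\big(1+o(1)\big)$ as $t\to0^+$, the constant $c_\alpha$ being the Beta value $\int_0^1\tau^{-\alpha}(1-\tau)^{\beta}\,d\tau=B(1-\alpha,\beta+1)$ times the Jacobian factor $(\alpha/2)^{1-\alpha}$. Inserting this into (\ref{eqn:FixedPoint}), noting that the prefactor $\tfrac{\alpha}{2}+(1-\tfrac{\alpha}{2})z-t$ collapses to $\tfrac{\alpha}{2}$ to leading order on $[0,z]$, and performing the remaining elementary integration $\int_0^z t^{\,1-\alpha+\beta}\,dt$, the right-hand side behaves like a constant times $z^{\,2-\alpha+\beta}$. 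Matching this against $y(z)^{m+1}\sim A^{m+1}z^{(m+1)\beta}$ both reproduces the exponent identity $(m+1)\beta=2-\alpha+\beta$ (i.e. $\beta=(2-\alpha)/m$) and leaves a single algebraic equation of the shape $A^{m}=(\alpha/2)^{2-\alpha}\,\Gamma(\beta)/\Gamma(2-\alpha+\beta)\times(\text{rational factor})$; solving for $A$ and simplifying the Gamma ratio with $\beta=(2-\alpha)/m$ and $\Gamma(x+1)=x\Gamma(x)$ should deliver the coefficient displayed in (\ref{eqn:Asym}).

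The real work is the rigorous step, namely upgrading the two-sided $\Theta(z^\beta)$ bound to a genuine equivalence despite the non-Lipschitz nonlinearity; this I would do by a bootstrap on $\phi$. Feeding the current bounds $C_1\le\phi\le C_2$ into the right-hand side of (\ref{eqn:FixedPoint}) and carrying out the two integrations exactly yields sharper bounds $C_1'\le\phi\le C_2'$ valid on a smaller interval $(0,\varepsilon]$; the amplitude map implicit in this step is $A\mapsto\big(\text{const}\cdot A\big)^{1/(m+1)}$, whose derivative at its positive fixed point equals $1/(m+1)<1$, so the iteration contracts and forces $\limsup_{z\to0}\phi-\liminf_{z\to0}\phi\to0$, pinning the limit at the unique root $A$ of the algebraic equation above. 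The two places demanding care are (i) showing that the remainders dropped in the local expansion of $F_\alpha$ and in the prefactor are uniformly $o(t^{1-\alpha+\beta})$ and $o(1)$, so that they survive both integrations without polluting the leading coefficient, and (ii) verifying that the contraction is uniform down to $z=0$ rather than merely on each fixed subinterval; once convergence of $\phi$ is secured, its value is forced by the coefficient matching and the proof is complete.
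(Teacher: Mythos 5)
A preliminary remark: the paper contains \emph{no} proof of this lemma at all --- it is quoted from the companion preprint \cite{Plo17a} (``for the proofs of the next two lemmas see \cite{Plo17a}''), so your attempt cannot be compared with an in-paper argument and must stand on its own. Its skeleton is reasonable: trapping $\phi(z)=y(z)z^{-\beta}$, $\beta=(2-\alpha)/m$, between the positive constants of Lemma \ref{lem:estimates} and then upgrading to convergence of $\phi$ by feeding these bounds back through (\ref{eqn:FixedPoint}) is legitimate, because the kernel is positive so the right-hand side is monotone in $y$. Your contraction step can even be simplified: writing $L^{\pm}$ for the $\limsup$/$\liminf$ of $\phi$ and $\kappa$ for the constant of the amplitude map, a single pass of the monotone bootstrap gives $(L^{+})^{m+1}\le\kappa L^{+}$ and $(L^{-})^{m+1}\ge\kappa L^{-}$, hence $L^{+}\le\kappa^{1/m}\le L^{-}\le L^{+}$, so all are equal; no iteration or uniform-contraction discussion is required.

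The genuine gap is the final, unverified sentence of your computation: solving your amplitude equation does \emph{not} deliver the coefficient displayed in (\ref{eqn:Asym}). Carry your own calculation to the end (and note that your $c_\alpha$ dropped the factor $1/\Gamma(1-\alpha)$ built into the definition (\ref{eqn:F})): the power input $Au^\beta$ produces
\[
F_\alpha y(t)\sim A\left(\frac{\alpha}{2}\right)^{1-\alpha}\frac{\Gamma(\beta+1)}{\Gamma(2-\alpha+\beta)}\,t^{1-\alpha+\beta},
\]
and inserting this into (\ref{eqn:FixedPoint}) gives
\[
A^{m+1}=\frac{m+1}{m}\left(\frac{\alpha}{2}\right)^{2-\alpha}\frac{\Gamma(\beta+1)}{\Gamma(2-\alpha+\beta)}\,\frac{A}{2-\alpha+\beta},
\qquad\text{i.e.}\qquad
A^{m}=\frac{1}{m}\left(\frac{\alpha}{2}\right)^{2-\alpha}\frac{\Gamma(\beta)}{\Gamma(2-\alpha+\beta)}.
\]
Since $(2-\alpha)\left(1+\frac{1}{m}\right)-1=1-\alpha+\beta$ and $\Gamma(2-\alpha+\beta)=(1-\alpha+\beta)\Gamma(1-\alpha+\beta)$, the constant printed in (\ref{eqn:Asym}) equals $(\alpha/2)^{2-\alpha}\Gamma(\beta)/\Gamma(2-\alpha+\beta)$, which is $mA^{m}$ --- not $A$; the two coincide only for $m=1$. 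This mismatch is structural, not a bookkeeping accident: the right-hand side of the equation is linear in $y$ while the left-hand side is $y^{m+1}$, so the amplitude equation is necessarily of the form $A^{m+1}=\kappa A$, forcing the asymptotic coefficient to be the $m$-th root $\kappa^{1/m}$, whereas the printed coefficient is a plain Gamma-ratio carrying no such root. A sanity check at $\alpha=1$, where $F_1$ is the identity and the problem reduces to the classical porous-medium self-similar profile, gives $y(z)\sim 2^{-1/m}z^{1/m}$, consistent with the root form and inconsistent with the printed value $m/2$ (which for $m\geq 3$ even exceeds the upper bound $C_2=\Gamma(2)^{-1/(m+1)}\leq 1$ of Lemma \ref{lem:estimates}). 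So either the printed statement carries a normalization error inherited from \cite{Plo17a}, or your derivation is faulty; in either case a proposal that derives one constant and asserts it ``should deliver'' a visibly different one has not proved the statement --- the step you waved through is exactly where the argument breaks.
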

The asymptotics is thus of power-type and the constant of proportionality is known. As a quick application of the above lemma we propose a sensible choice of the starting value of the numerical method (\ref{eqn:NumMet})
\begin{equation}
	y_1 := \left(\frac{\alpha}{2}\right)^{2-\alpha} \frac{\Gamma\left(\frac{2-\alpha}{m}\right)}{\Gamma\left(1-\alpha+\frac{2-\alpha}{m}\right)} \frac{h^\frac{2-\alpha}{m}}{(2-\alpha)\left(1+\frac{1}{m}\right)-1},
\label{eqn:startingValue}
\end{equation}
which introduces the starting error of higher order than $h^\frac{2-\alpha}{m}$ as $h\rightarrow 0^+$. 

Now, we proceed to the proof of the fact that (\ref{eqn:NumMet}) is convergent. First, we state an auxiliary lemma which can be thought as a generalization of the discreet Gronwall-Bellman's Lemma (for a thorough review of similar results see \cite{Ame97}). 
\begin{lem}
\label{lem:recur}
	Let $\left\{e_n\right\}$, $n=1,2,...$ be a sequence of positive numbers satisfying 
	\begin{equation}
	e_n \leq \frac{1}{n^\beta} \left(A \sum_{i=1}^{n-1} (n-i)^\gamma \; e_i + B\right), \quad n\geq 2,
	\label{eqn:LemRecur}
	\end{equation}
	where $A$, $B$ are positive constants, $\beta \geq 1$ and $\gamma \geq 0$. Then, provided that $e_1 \leq B$ we have
	\begin{equation}
		e_n \leq B\;\frac{f(n)}{n^\beta},
	\label{eqn:LemRecurResult}
	\end{equation}
	where $f(1) = 1$ and 
	\begin{equation}
		f(n) = 1 + A n^\gamma \left[\prod_{j=2}^{n-1} \left(1+A j^{\gamma-\beta}\right) + \sum_{i=2}^{n-1} \frac{1}{i^\beta} \prod_{j=i+1}^{n-1} \left(1+A j^{\gamma-\beta}\right)\right], \quad n\geq 2.
	\label{eqn:RecurEqSol}
	\end{equation}
\end{lem}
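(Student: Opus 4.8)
The plan is to strip the double-indexed convolution in (\ref{eqn:LemRecur}) down to a first-order linear recurrence for the partial sums and then to solve that recurrence explicitly. The decisive opening move is the crude estimate $(n-i)^\gamma \leq n^\gamma$, which holds for all $1\leq i\leq n-1$ precisely because $\gamma\geq 0$ (this is the only place the sign condition on $\gamma$ is used). Substituting it into (\ref{eqn:LemRecur}) removes the dependence of the kernel on the summation index $i$ and gives
\begin{equation}
e_n \leq A n^{\gamma-\beta}\sum_{i=1}^{n-1} e_i + \frac{B}{n^\beta}, \quad n \geq 2.
\end{equation}

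Next I would introduce the partial sums $\sigma_n := \sum_{i=1}^{n} e_i$, so that $\sum_{i=1}^{n-1}e_i = \sigma_{n-1}$ and $e_n = \sigma_n-\sigma_{n-1}$. Adding $\sigma_{n-1}$ to both sides of the previous display converts the inequality into the clean first-order form
\begin{equation}
\sigma_n \leq a_n \sigma_{n-1} + \frac{B}{n^\beta}, \qquad a_n := 1 + A n^{\gamma-\beta}, \quad n \geq 2,
\end{equation}
with initial datum $\sigma_1 = e_1 \leq B$, and with all $a_n>0$ and all right-hand terms positive so that no sign issues arise. I regard this reduction as the heart of the argument: recognizing that the crude kernel bound decouples the summation and collapses the convolution into an ordinary linear recurrence is the only real insight required; everything after it is bookkeeping. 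The hypothesis $\beta\geq 1$ plays no role in establishing the bound and is needed only downstream for the decay of $f(n)/n^\beta$.

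Solving the recurrence is standard. A short induction on $n$, starting from $n=2$ and using $\prod_{j=n+1}^{n}a_j=1$ to absorb the newly produced term $B/n^\beta$ into the running sum, yields
\begin{equation}
\sigma_n \leq B\left[\prod_{j=2}^{n}a_j + \sum_{i=2}^{n}\frac{1}{i^\beta}\prod_{j=i+1}^{n}a_j\right],
\end{equation}
with the usual convention that empty products equal $1$ and empty sums equal $0$. I would not grind through the inductive step, which is routine.

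Finally I would substitute the resulting bound for $\sigma_{n-1}$ back into $e_n \leq A n^{\gamma-\beta}\sigma_{n-1} + B/n^\beta$ and factor out $B/n^\beta$. Recalling $a_j = 1 + A j^{\gamma-\beta}$, the bracket reproduces exactly the quantity multiplying $An^\gamma$ in (\ref{eqn:RecurEqSol}), so that $e_n \leq (B/n^\beta)\,f(n)$, which is (\ref{eqn:LemRecurResult}). The case $n=1$ holds trivially since $f(1)=1$ and $e_1\leq B$, and I would check $n=2$ directly against the empty-product convention. The only genuine subtleties are the monotonicity of $x\mapsto x^\gamma$ invoked in the opening estimate and the careful accounting of empty products and sums at the boundary indices; neither is a serious obstacle, so I expect the main effort to lie in presenting the reduction cleanly rather than in overcoming any analytic difficulty.
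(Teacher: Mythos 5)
Your proof is correct, and it is organized genuinely differently from the paper's. Both arguments turn on the same two ingredients: the decoupling estimate $(n-i)^\gamma \le n^\gamma$ (indeed the only place $\gamma\ge 0$ enters; you are also right that $\beta\ge 1$ is never used inside the lemma itself) and the explicit solution, by unrolling, of an affine first-order recurrence with coefficient $1+Aj^{\gamma-\beta}$ and forcing term $j^{-\beta}$. The difference is which sequence that recurrence is attached to. The paper runs a strong induction on the asserted bound $e_n \le B\, f(n)/n^\beta$, which forces $f$ to satisfy the exact majorant equation $f(n) = A n^\gamma \sum_{i=1}^{n-1} f(i)\, i^{-\beta} + 1$ with $f(1)=1$; it then solves this nonlocal equation by passing to the partial sums $g(n)=\sum_{i=1}^{n-1} f(i)\, i^{-\beta}$ of the \emph{majorant}, which satisfy the affine recurrence above. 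You skip the outer induction entirely: you telescope the partial sums $\sigma_n=\sum_{i\le n} e_i$ of the \emph{error sequence itself}, solve the resulting recurrence inequality $\sigma_n \le \left(1+An^{\gamma-\beta}\right)\sigma_{n-1}+Bn^{-\beta}$, and back-substitute into $e_n \le An^{\gamma-\beta}\sigma_{n-1}+Bn^{-\beta}$, which reproduces (\ref{eqn:RecurEqSol}) constructively. Your route is slightly more direct, since the formula for $f$ emerges rather than being posited as the solution of a recurrence and then verified; the paper's route exhibits $f$ as the exact (not merely upper-bounding) solution of the majorizing equation, which is the natural object when one subsequently analyzes its growth, as the paper does in Theorem \ref{thm:convergence} via Stirling's formula. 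One small point to make explicit if you write your version out: positivity of the $e_i$ is what licenses the termwise bound $(n-i)^\gamma e_i\le n^\gamma e_i$ under the sum, so it should be cited at the opening estimate.
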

\begin{proof}
The proof proceeds by mathematical induction. The right-hand side of (\ref{eqn:LemRecurResult}) reduces to $B$ for $n=1$ (with the usual convention for the product: $\prod_{i=1}^{0}=1$) which yields $e_1 \leq B$. The initial inductive step is thus satisfied by the assumption. 

Assume now that (\ref{eqn:LemRecurResult}) is satisfied for $(n-1)$-th term. We will show that this also is the case for $e_{n}$. To this end, use the inductive assumption to obtain
\begin{equation}
	e_{n} \leq \frac{B}{n^\beta} \left(A \sum_{i=1}^{n-1} (n-i)^\gamma \; \frac{f(i)}{i^\beta} + 1\right).
\end{equation}
We can immediately estimate the sum to obtain
\begin{equation}
e_{n} \leq \frac{B}{n^\beta} \left(A n^\gamma\sum_{i=1}^{n-1} \frac{f(i)}{i^\beta} + 1\right).
\end{equation}
In order to make the above inequality to satisfy the assertion we require that
\begin{equation}
	A n^\gamma \sum_{i=1}^{n-1} \frac{f(i)}{i^\beta} + 1 = f(n), \quad f(1)  = 1.
\label{eqn:RecurEq}
\end{equation}
We will show that the solution of this nonlocal recurrence equation is equal to (\ref{eqn:RecurEqSol}). Define $g(n) := \sum_{i=1}^{n-1} f(i) i^{-\beta}$ and notice that
\begin{equation}
	g(n+1)-g(n) = \frac{f(n)}{n^\beta}, \quad g(2) = 1.
\end{equation}
Hence, thanks to (\ref{eqn:RecurEq}) we have
\begin{equation}
	A n ^\gamma g(n)+1 = n^\beta \left(g(n+1)-g(n)\right), \quad g(2) = 1.
\end{equation}
After rearranging we obtain the following nonhomogeneous recurrence relation
\begin{equation}
	g(n+1) = n^{-\beta} + \left(1+A n^{\gamma-\beta}\right)g(n), \quad g(2) = 1.
\end{equation}
To simplify the notation we temporarily introduce $a(n):=n^{-\beta}$ and $b(n):=1+An^{\gamma-\beta}$ and solve the following equation by the successive iteration
\begin{equation}
\begin{split}
	g(n+1) &= a(n) + b(n) g(n) = a(n) + a(n-1) b(n) + b(n) b(n-1) g(n-1) = \\
	&= a(n) + a(n-1) b(n) + a(n-2) b(n-1)b(n) + b(n)b(n-1)b(n-2)g(n-2) = ...
\end{split}
\end{equation} 
Continuing in this inductive fashion we can show that
\begin{equation}
	g(n+1) = \prod_{j=2}^{n} b(j) + \sum_{i=2}^{n} a(i) \prod_{j=i+1}^{n} b(j),
\end{equation}
where the convention $\prod_{j=2}^{1}=1$ is used. Going back to the original variables we have
\begin{equation}
	g(n+1) = \prod_{j=2}^{n} \left(1+A j^{\gamma-\beta}\right) + \sum_{i=2}^{n} \frac{1}{i^\beta} \prod_{j=i+1}^{n} \left(1+A j^{\gamma-\beta}\right), \quad n\geq 1,
\end{equation}
and if we use (\ref{eqn:RecurEq}) and the definition of $g(n)$ we can state the result in terms of the $f(n)$ function
\begin{equation}
	f(n) = 1 + A n^\gamma \left[\prod_{j=2}^{n-1} \left(1+A j^{\gamma-\beta}\right) + \sum_{i=2}^{n-1} \frac{1}{i^\beta} \prod_{j=i+1}^{n-1} \left(1+A j^{\gamma-\beta}\right)\right], \quad n\geq 2.
\end{equation}
This concludes the proof.
\end{proof}

Having the above in hand we can state the main result.
\begin{thm}
\label{thm:convergence}
Assume that $0\leq z\leq X<1$ and let $y_n$ be the calculated from (\ref{eqn:NumMet}) approximation of the solution of (\ref{eqn:mainEqInt}). Moreover, define the error $e_n:=y_n-y(nh)$ and assume that $|e_1|\leq \frac{1}{(m+1)C_1^m}\frac{\delta(h)}{h^{2-\alpha}}$, where $C_1$ is from Lemma \ref{lem:estimates}. Then, for a quadrature with $\delta_n(h) \leq 0$ for every $n\in\mathbb{N}$ we have
\begin{equation}
	|e_n| \leq const. \times \delta(h)h^{\alpha-A-1} \quad \text{as} \quad h\rightarrow 0^+ \quad \text{with} \quad nh\rightarrow z_n,
\end{equation}
with $A= \frac{W D}{(m+1)C_1^m}$, where $W$ is from (\ref{eqn:weights}) and $D=\frac{m+1}{2m}\frac{1}{\Gamma(2-\alpha)} \left(\frac{1}{1-X}\right)^{1-\alpha}$.
\end{thm}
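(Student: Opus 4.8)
The plan is to cast the error into the recurrence of Lemma~\ref{lem:recur} and then to read off the rate from the explicit solution $f(n)$. First I would subtract the exact discretized relation (\ref{eqn:LocCons}) from the scheme (\ref{eqn:NumMet}), which gives
\[
y_n^{m+1}-y(z_n)^{m+1}=h\sum_{i=1}^{n-1}w_{n,i}K_{n,i}e_i-\delta_n(h).
\]
To deal with the non-Lipschitz nonlinearity I would apply the mean value theorem to $t\mapsto t^{m+1}$, writing the left-hand side as $(m+1)\xi_n^m e_n$ for some $\xi_n$ between $y_n$ and $y(z_n)$. Because $\delta_n(h)\le 0$, Proposition~\ref{prop:Mono} (with the starting value chosen so that $y_1\ge y(h)$, as in (\ref{eqn:startingValue})) forces $e_n\ge 0$, hence $\xi_n\ge y(z_n)$, and together with the hypothesis this yields $0\le e_1\le B$. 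Lemma~\ref{lem:estimates} then supplies the decisive lower bound $\xi_n^m\ge C_1^m(nh)^{2-\alpha}=C_1^m n^{2-\alpha}h^{2-\alpha}$, which is precisely the power of $n$ that will become the prefactor $n^{-\beta}$ with $\beta=2-\alpha\ge 1$.

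Second, I would establish a pointwise kernel bound of the form $K(z,u)\le D\,(z-u)^{1-\alpha}$ valid for $0\le u\le z\le X<1$. Starting from the representation (\ref{eqn:kernel}), I would estimate the inner integrand using $s^{2/\alpha}\le 1$ and the elementary inequality $(1-x)^{\alpha/2}\ge 1-\frac{\alpha}{2}x$, the latter controlling the lower limit through $1-\left(\frac{1-z}{1-u}\right)^{\alpha/2}\le \frac{\alpha}{2}\frac{z-u}{1-u}$; integrating the integrable singularity $\int(1-s)^{-\alpha}\,ds$ then produces the factor $(z-u)^{1-\alpha}$ and, after using $1-u\ge 1-X$, the constant $D=\frac{m+1}{2m}\frac{1}{\Gamma(2-\alpha)}\left(\frac{1}{1-X}\right)^{1-\alpha}$. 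Evaluated on the grid this reads $K_{n,i}\le D\,h^{1-\alpha}(n-i)^{1-\alpha}$, i.e. it furnishes the weight $(n-i)^\gamma$ with $\gamma=1-\alpha\ge 0$.

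Combining the two estimates with the weight bound (\ref{eqn:weights}), and noting that the $h$ from the quadrature multiplies the $h^{1-\alpha}$ from the kernel so as to cancel exactly the $h^{2-\alpha}$ in the denominator, I would arrive at
\[
e_n\le \frac{1}{n^{2-\alpha}}\left(\frac{WD}{(m+1)C_1^m}\sum_{i=1}^{n-1}(n-i)^{1-\alpha}e_i+\frac{\delta(h)}{(m+1)C_1^m h^{2-\alpha}}\right).
\]
This is exactly (\ref{eqn:LemRecur}) with $\beta=2-\alpha$, $\gamma=1-\alpha$, $A=\frac{WD}{(m+1)C_1^m}$ and $B=\frac{\delta(h)}{(m+1)C_1^m h^{2-\alpha}}$, while the hypothesis $|e_1|\le B$ is the admissibility condition $e_1\le B$. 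Lemma~\ref{lem:recur} then delivers $e_n\le B\,f(n)\,n^{-(2-\alpha)}$ with $f$ as in (\ref{eqn:RecurEqSol}).

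The last and most delicate step is the asymptotic evaluation of $f(n)$. Since here $\gamma-\beta=-1$, the products in (\ref{eqn:RecurEqSol}) reduce to $\prod_j(1+A/j)$, which I would evaluate via $\prod_{j=2}^{n-1}\frac{j+A}{j}=\frac{\Gamma(2)\Gamma(n+A)}{\Gamma(2+A)\Gamma(n)}\sim const.\cdot n^A$, using $\Gamma(n+A)/\Gamma(n)\sim n^A$. The accompanying sum behaves like $\sum_{i=2}^{n-1}i^{-(2-\alpha)}\prod_{j=i+1}^{n-1}(1+A/j)\sim n^A\sum_i i^{-(2-\alpha+A)}$, whose series converges because $2-\alpha+A>1$, so it too is $O(n^A)$. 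Multiplying by the prefactor $A\,n^{1-\alpha}$ gives $f(n)=O(n^{1-\alpha+A})$ and hence $f(n)\,n^{-(2-\alpha)}=O(n^{A-1})$. Substituting $B$ and replacing $n$ by $z_n/h$, so that $n^{A-1}=z_n^{A-1}h^{1-A}$ with $z_n^{A-1}\le X^{A-1}$ absorbed into the constant, produces
\[
e_n\le const.\times\frac{\delta(h)}{h^{2-\alpha}}\,h^{1-A}=const.\times\delta(h)\,h^{\alpha-A-1},
\]
which is the assertion. I expect the principal obstacle to lie in this asymptotic analysis of $f(n)$—in particular in justifying the Gamma-function growth of the products uniformly in the summation index and the convergence of the tail series—while the kernel estimate, though requiring care near the diagonal $u=z$ where $K$ vanishes, is comparatively routine.
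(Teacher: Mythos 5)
Your proposal is correct and follows essentially the same route as the paper's own proof: mean-value theorem plus Proposition \ref{prop:Mono} for the sign of $e_n$, the lower bound from Lemma \ref{lem:estimates} giving $\beta=2-\alpha$, the kernel estimate $K(z,u)\le D(z-u)^{1-\alpha}$, Lemma \ref{lem:recur} with $\gamma=1-\alpha$, and the Gamma-function asymptotics of $f(n)$ yielding the rate $h^{\alpha-A-1}$. The only differences are cosmetic (you use $(1-x)^{\alpha/2}\ge 1-\frac{\alpha}{2}x$ where the paper uses $t^p\ge t$ for $t\in[0,1]$, and your final absorption of $z_n^{A-1}$ into the constant should be stated for a fixed grid point $z_n$ rather than via $z_n^{A-1}\le X^{A-1}$, which reverses direction when $A<1$), and these are at the same level of informality as the paper itself.
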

\begin{proof}
Notice that by the assumption and Proposition \ref{prop:Mono} we have $e_n:=y_n-y(nh)\geq 0$. Using the Lagrange's Mean-Value Theorem we obtain
\begin{equation}
	y_n^{m+1}-y(nh)^{m+1} = (m+1)\xi_n^m e_n,
\end{equation}
where $y(nh) \leq \xi_n \leq y_n$. On the other hand, from (\ref{eqn:mainEqInt}) and (\ref{eqn:LocCons}) 
\begin{equation}
	y_n^{m+1}-y(nh)^{m+1} = h\sum_{i=1}^{n-1} w_{n,i}K_{n,i} e_i - \delta_n(h) \leq h\sum_{i=1}^{n-1} w_{n,i}K_{n,i} e_i + \delta(h).
\end{equation}
Therefore, from (\ref{eqn:weights})
\begin{equation}
	(m+1) y(nh)^m e_n \leq W h\sum_{i=1}^{n-1} K_{n,i} e_i + \delta(h).
\end{equation}
Now, using Lemma \ref{lem:estimates} we can estimate the left-hand side
\begin{equation}
	(m+1) C_1^m (nh)^{2-\alpha} e_n \leq W h\sum_{i=1}^{n-1} K_{n,i} e_i + \delta(h).
\label{eqn:theoremEst1}
\end{equation}
The next step is to find an appropriate bound for the kernel $K$. To this end we go back to (\ref{eqn:kernel}) and notice that
\begin{equation}
	s^\frac{\alpha}{2}(1-u)-\left(1-\frac{\alpha}{2}\right)(1-z) = s^\frac{\alpha}{2}(1-u) \left[1-s^{-\frac{\alpha}{2}}\left(1-\frac{\alpha}{2}\right)\frac{1-z}{1-u} \right] \leq \frac{1}{2},
\end{equation}
because $(\frac{1-z}{1-u})^{\alpha/2}\leq s \leq 1$. Hence,
\begin{equation}
	K(z,u) \leq \frac{m+1}{2m}\frac{1}{\Gamma(1-\alpha)} \int_{(\frac{1-z}{1-u})^{\alpha/2}}^{1}(1-s)^{-\alpha}ds = \frac{m+1}{2m}\frac{1}{\Gamma(2-\alpha)} \left(1-\left(\frac{1-z}{1-u}\right)^{\alpha/2}\right)^{1-\alpha}.
\end{equation}
Moreover, by convexity
\begin{equation}
	\left(1-\left(\frac{1-z}{1-u}\right)^{\alpha/2}\right)^{1-\alpha} = \left(1-\left(1-\frac{z-u}{1-u}\right)^{\alpha/2}\right)^{1-\alpha} \leq \left(\frac{z-u}{1-u}\right)^{1-\alpha}.
\end{equation}
Finally,
\begin{equation}
	K(z,u) \leq \frac{m+1}{2m}\frac{1}{\Gamma(2-\alpha)} \left(\frac{z-u}{1-X}\right)^{1-\alpha} =: D \left(z-u\right)^{1-\alpha},
\end{equation}
where we have used the assumption that $0\leq z\leq X<1$. 

Now, going back to (\ref{eqn:theoremEst1}) and noticing that $z_n-z_i = h(n-i)$ we can write
\begin{equation}
\begin{split}
	e_n &\leq \frac{1}{n^{2-\alpha}}\left(\frac{W D}{(m+1)C_1^m}\sum_{i=1}^{n-1}(n-i)^{1-\alpha} e_i + \frac{1}{(m+1)C_1^m}\frac{\delta(h)}{h^{2-\alpha}}\right) \\ 
	&=: \frac{1}{n^{2-\alpha}}\left(A\sum_{i=1}^{n-1}(n-i)^{1-\alpha} e_i +B\right).
\end{split}
\end{equation}
This form of the inequality can be plugged into Lemma \ref{lem:recur} to yield 
\begin{equation}
	e_n \leq \frac{1}{(m+1)C_1^m}\frac{\delta(h)}{(nh)^{2-\alpha}} f(n).
\end{equation}
It is interesting for us to learn how does the function $f$ behave asymptotically as $n\rightarrow \infty$. To see the exact order we write it explicitly as
\begin{equation}
	f(n) = 1 + A n^{1-\alpha} \left[\prod_{j=2}^{n-1} \left(1+\frac{A}{j}\right) + \sum_{i=2}^{n-1} \frac{1}{i^{2-\alpha}} \prod_{j=i+1}^{n-1} \left(1+\frac{A}{j}\right)\right], \quad n\geq 2.
\end{equation}
First, we can use Stirling's formula to obtain
\begin{equation}
	\prod_{j=2}^{n-1} \left(1+\frac{A}{j}\right) = \frac{\Gamma(n+A)}{\Gamma(n)\Gamma(2+A)} \sim \frac{n^A}{\Gamma(2+A)},
\end{equation}
as $n\rightarrow\infty$. Similarly,
\begin{equation}
\begin{split}
	\sum_{i=2}^{n-1} \frac{1}{i^{2-\alpha}} \prod_{j=i+1}^{n-1} \left(1+\frac{A}{j}\right) &= \frac{\Gamma(n+A)}{\Gamma(n)}\sum_{i=2}^{n-1} \frac{1}{i^{2-\alpha}} \frac{\Gamma(i+1)}{\Gamma(i+1+A)} \\
	&\sim n^A \left(\frac{1}{\Gamma(2+A)}+\sum_{i=2}^{\infty} \frac{1}{i^{2-\alpha}} \frac{\Gamma(i+1)}{\Gamma(i+1+A)} \right),
\end{split}
\end{equation}
as $n\rightarrow\infty$. Therefore, because $nh$ remains bounded for large $n$, we have
\begin{equation}
	e_n \leq \text{const.} \times \delta(h) h^{\alpha-A-1} \quad \text{as} \quad n\rightarrow \infty.
\end{equation} 
This concludes the proof.
\end{proof}

Up to this point we have a result stating that a family of numerical methods (\ref{eqn:NumMet}) with $\delta_n(h) \leq 0$ will be convergent provided that the constant $A$ is sufficiently small. Notice that since by Lemma \ref{thm:IC} the solution $y$ is not regular at $z\rightarrow 0^+$ we do not have to (and ought to!) use a high-order quadrature. Moreover, since $y(z)^{m+1}$ is Lipschitz continuous for $X\leq z\leq 1$, the classical theory of numerical methods for integral equations works on that interval. Hence, we are only interested in solving (\ref{eqn:mainEqInt}) for the neighbourhood of zero, i.e. $0\leq z\leq X$. Eventually, $X$ can be made sufficiently small. 

As an example of the above we choose the quadrature to be the midpoint method and formulate the result as a corollary.
\begin{cor}
Assume that $0\leq z\leq X<1$ and let $y_n$ be the calculated from (\ref{eqn:NumMet}) approximation to the solution of (\ref{eqn:mainEqInt}). If the quadrature (\ref{eqn:LocCons}) is chosen to be the midpoint method, i.e. $y_1$ chosen according to (\ref{eqn:startingValue}) and
\begin{equation}
	y_{2n+k}^{m+1} = \frac{1}{2}h K_{2n+k,k} \;y_k + 2 h \sum_{i=1}^{n} K_{2n+k,2i+k-1} \; y_{2i+k-1}, \quad k\in\left\{0,1\right\}, \quad n > 1,
	\label{eqn:Midpoint}
\end{equation}
then it is convergent provided that
\begin{equation}
	m > 2-\alpha \quad \text{and} \quad A=\frac{2 D}{(m+1)C_1^m} < \alpha + \frac{2-\alpha}{m}.
\end{equation}
Moreover, the order of convergence is at least
\begin{equation}
	2-(2-\alpha)\left(1-\frac{1}{m}\right)-\frac{2 D}{(m+1)C_1^m}.
\label{eqn:orderTheo}
\end{equation}
\end{cor}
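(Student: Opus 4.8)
The plan is to obtain the corollary as a direct specialization of Theorem \ref{thm:convergence} to the midpoint quadrature, so the work reduces to three tasks: reading off the weight bound $W$, verifying the sign condition $\delta_n(h)\le 0$, and pinning down the order of the maximal consistency error $\delta(h)$. Inspecting the coefficients of the composite scheme (\ref{eqn:Midpoint}), the interior nodes carry weight $2$ and the single boundary node carries weight $\tfrac12$, so (\ref{eqn:weights}) holds with $W=2$; this is exactly the value that produces $A=\frac{2D}{(m+1)C_1^m}$. It also remains to confirm the hypothesis on the starting error: since $y_1$ in (\ref{eqn:startingValue}) is chosen to match the leading term of Lemma \ref{thm:IC}, the quantity $|e_1|=|y_1-y(h)|$ is of order higher than $h^{(2-\alpha)/m}$, which for small $h$ is dominated by $B=\frac{1}{(m+1)C_1^m}\frac{\delta(h)}{h^{2-\alpha}}$, so the admissibility requirement $|e_1|\le B$ of the theorem is met.

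First I would establish the sign condition. The local midpoint error over a subinterval has the sign of $-g''$, where $g(t)=K(z_n,t)y(t)$ is the integrand; hence $\delta_n(h)\le 0$ follows once $g$ is shown to be concave on the relevant range. Near the origin Lemma \ref{thm:IC} gives $y(t)\sim C\,t^{(2-\alpha)/m}$, and the hypothesis $m>2-\alpha$ forces the exponent $(2-\alpha)/m<1$, so $y$ — and therefore $g$, since $K(z_n,\cdot)$ is positive and smooth away from the diagonal — is concave near $t=0$. This is precisely where the condition $m>2-\alpha$ enters the statement.

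Next I would estimate $\delta(h)$. The integrand is smooth on compact subsets of $(0,z_n)$ but has two sources of irregularity: the algebraic singularity $g(t)\sim t^{(2-\alpha)/m}$ at the origin coming from $y$, and the boundary behaviour $K(z_n,t)\sim c\,(z_n-t)^{1-\alpha}$ at the diagonal coming from the kernel's singular derivative. On the first subinterval $[0,2h]$ a direct computation of the midpoint error for $t^{(2-\alpha)/m}$ yields a contribution of order $h^{1+(2-\alpha)/m}$, while the $j$-th interior subinterval contributes $h^3|g''|\sim h^{1+(2-\alpha)/m}\,j^{(2-\alpha)/m-2}$; since $(2-\alpha)/m<1$ the series $\sum_j j^{(2-\alpha)/m-2}$ converges, so the whole origin contribution is $O(h^{1+(2-\alpha)/m})$. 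The main obstacle is controlling the diagonal contribution and confirming that, uniformly over $0\le z\le X<1$, the origin term governs $\delta(h)$, so that $\delta(h)=O(h^{1+(2-\alpha)/m})$ as needed; this is the delicate part, requiring the midpoint error to be bounded against the $(z_n-t)^{1-\alpha}$ behaviour for every $n$.

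Finally, I would substitute $W=2$ and $\delta(h)=O(h^{1+(2-\alpha)/m})$ into the conclusion of Theorem \ref{thm:convergence}, obtaining $|e_n|\le \text{const.}\times\delta(h)\,h^{\alpha-A-1}=\text{const.}\times h^{1+(2-\alpha)/m+\alpha-A-1}=\text{const.}\times h^{\alpha+(2-\alpha)/m-A}$. A short rearrangement identifies the exponent with $2-(2-\alpha)\left(1-\tfrac1m\right)-\frac{2D}{(m+1)C_1^m}$, matching (\ref{eqn:orderTheo}). Convergence, i.e. $e_n\to 0$ as $h\to 0^+$, holds exactly when this exponent is positive, which is the stated inequality $A=\frac{2D}{(m+1)C_1^m}<\alpha+\frac{2-\alpha}{m}$, completing the argument.
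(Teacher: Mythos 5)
Your proposal is correct and follows essentially the same route as the paper's proof: specialize Theorem \ref{thm:convergence} with $W=2$, verify the sign condition $\delta_n(h)\le 0$ via concavity of the integrand near the origin (which is exactly where $m>2-\alpha$ enters), verify the starting-error hypothesis using (\ref{eqn:startingValue}) and Lemma \ref{thm:IC}, insert $\delta(h)=O\bigl(h^{1+(2-\alpha)/m}\bigr)$, and finish with the same exponent algebra and positivity check. The two analytic ingredients you try to derive by hand --- the non-positivity of the midpoint error for a (locally) concave integrand, and the consistency order for the nonsmooth integrand, including the diagonal contribution you flag as delicate and leave unfinished --- are precisely the steps the paper does not prove either, deferring instead to \cite{Plo17} and \cite{Ded05}, so your sketch is, if anything, more explicit than the published argument on those points.
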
 
\begin{proof}
We have to check whether the assumptions of the Theorem \ref{thm:convergence} are satisfied. First, it is an easy geometrical reasoning to ascertain that $\delta_n(h) \leq 0$ when $y$ is locally a concave function (for details see \cite{Plo17}). This is precisely the present case due to Lemma \ref{thm:IC} and the assumption that $m>2-\alpha$ (since $0<\alpha<1$). 

As was noted in \cite{Ded05} the midpoint quadrature for a nonsmooth function will, in our case, have an order of $\delta(h)\propto h^{1+(2-\alpha)/m}$ (in contrast with the second order for smooth functions). Since we are initializing the iteration from (\ref{eqn:startingValue}) which is an asymptotic form of the solution at $z\rightarrow 0^+$, the starting error will be $\mu=o(h^{(2-\alpha)/m})$. The number $(2-\alpha)/m$ is always greater than $1+(2-\alpha)/m - 2 + \alpha$ and hence $|e_1|$ will be smaller than $\text{const.} \times \delta(h) h^{\alpha-2}$ for sufficiently small $h$. The assumption of Theorem \ref{thm:convergence} concerning the initial step is thus satisfied. 

Finally, we can estimate the convergence error. From Theorem \ref{thm:convergence} we have 
\begin{equation}
	|e_n| \leq \text{const.} \times h^{1-(2-\alpha)\left(1-\frac{1}{m}\right)-A + 1}.
\end{equation}
To prove the convergence we have to show that the exponent is a positive constant. To this end notice that by the assumption
\begin{equation}
	1-(2-\alpha)\left(1-\frac{1}{m}\right)-A + 1 > 1-(2-\alpha)\left(1-\frac{1}{m}\right) - \alpha - \frac{2-\alpha}{m} + 1 = 1 - 2 + \alpha - \alpha +1 = 0.
\end{equation}
Therefore the method is convergent. 
\end{proof}

It can be verified that $C_1^m$ and $D$ are bounded for any $m$ and, hence, the last term in (\ref{eqn:orderTheo}) can be arbitrarily close to zero for sufficiently large $m$. Moreover, we can see that the theoretical estimate on the convergence error converges to $\alpha$ for large $m$. Therefore, the method is convergent for any $0<\alpha<1$ for sufficiently large $m$. 

We illustrate the above result by a series of numerical examples computing the solution of (\ref{eqn:mainEqInt}) by (\ref{eqn:Midpoint}) for different values of $\alpha$ and $m$. In each simulation we approximate the order of convergence by use of the Aitken's Method (it finds the order by extrapolation and refinement of the grid with $N$, $2N$ and $4N$ steps, see \cite{Lin85}) and compare it to the theoretical estimate (\ref{eqn:orderTheo}). The results obtained in MATLAB are given in Tab. \ref{tab:simulations}.

\begin{table}
	\centering
	\begin{tabular}{llcc}
		\toprule
		$\alpha$ & $m$ & theoretical order (\ref{eqn:orderTheo}) & empirical order \\
		\midrule
		\multirow{2}{*}{$\alpha=0.1$} & $m=1463$ & 0 & 0.83 \\
		& $m=10000$ & 0.09 & 0.98  \\
		\midrule
		\multirow{2}{*}{$\alpha=0.2$} & $m=252$ & 0 & 0.64 \\
		& $m=1000$ & 0.15 & 0.94 \\
		\midrule
		\multirow{2}{*}{$\alpha=0.3$} & $m=80$ & 0 &  0.58 \\
		& $m=100$ & 0.06 & 0.63 \\
		\midrule
		\multirow{2}{*}{$\alpha=0.4$} & $m=33$ & 0 & 0.60 \\
		& $m=100$ & 0.27 & 0.79 \\
		\midrule
		\multirow{2}{*}{$\alpha=0.5$} & $m=15$ & 0 & 0.66 \\
		& $m=100$ & 0.42 & 0.88 \\
		\midrule
		\multirow{2}{*}{$\alpha=0.6$} & $m=1$ & 0.16 & 1.01 \\
		& $m=10$ & 0.55 & 0.93 \\
		\midrule
		\multirow{2}{*}{$\alpha=0.7$} & $m=1$ & 0.44 & 0.90 \\
		& $m=10$ & 0.66 & 0.95 \\
		\midrule
		\multirow{2}{*}{$\alpha=0.8$} & $m=1$ & 0.68 & 0.87 \\
		& $m=10$ & 0.78 & 0.97 \\
		\midrule
		\multirow{2}{*}{$\alpha=0.9$} & $m=1$ & 0.88  & 0.85 \\
		& $m=10$ & 0.77 & 0.97 \\
		\midrule
		\multirow{2}{*}{$\alpha=0.99$} & $m=1$ & 1.04 & 0.83 \\
		& $m=10$ & 1.08 & 1.00 \\
		\bottomrule
	\end{tabular}
	\caption{Results of the simulations done for $N=3000$. For each $\alpha$ the corresponding $m$ has been chosen for the theoretical estimate on the order (\ref{eqn:orderTheo}) to be non-negative. }
	\label{tab:simulations}
\end{table}

First, we have to remark that obtaining an accurate value of the order of convergence is very demanding on the computer power. We have settled to choosing $N=3\cdot 10^3$ as the number of subdivisions of the $[0,1]$ interval (hence, the maximal division in Aitken's method is $12\cdot 10^3$). 

Notice that for $\alpha\rightarrow 1^-$ the method order approaches $1$ both theoretically (for large $m$) and empirically. For smaller values of $\alpha$ the empirical order gets lower but this fact can be due to the singularity of the kernel (\ref{eqn:kernelExp}) at $\alpha = 0$, which is difficult to resolve numerically. In almost every case the theoretical estimate is lower than the numerically found value of the order. This was anticipated since the value (\ref{eqn:orderTheo}) is not optimal - it depends on the bound from Lemma \ref{lem:estimates} which might not be accurate enough. The three small exceptions to the above, that is $(\alpha,m)=(0.9,1)$ and $(0.99,1)$ and $(0.99,10)$, are most probably caused by not sufficiently large $N$ used in our simulations to resolve the empirical order. It also may be conjectured that, guessing from our simulations, the true order of the method is equal to $1$ for, at least, $\alpha$ close to unity. The verification of this claim requires more refined proof techniques and numerical simulations which is the main goal of our future work. 

\section{Conclusion}
We have devised a convergent numerical method for solving the nonlocal nonlinear porous medium equation. Due to the non-Lipschitzian nonlinearity, the classical proof methods could have not been used. Our approach gives an estimate on the convergence error but it is clear that it does not cover all of the admissible $(\alpha,m)$ parameter space and is not optimal. One of the reasons is the $C_1$ constant which enlarge the essential exponent $A$. The object of our future work will be to overcome this difficulty and obtain more strict bounds on the convergence order.

\section*{Acknowledgement}
This research was supported by the National Science Centre, Poland under the project with a signature NCN $2015/17/D/ST1/00625$.


\begin{thebibliography}{10}
	
	\bibitem{Ame97}
	William~F Ames and BG~Pachpatte.
	\newblock {\em Inequalities for differential and integral equations}, volume
	197.
	\newblock Academic press, 1997.
	
	\bibitem{Atk71}
	FV~Atkinson and LA~Peletier.
	\newblock Similarity profiles of flows through porous media.
	\newblock {\em Archive for Rational Mechanics and Analysis}, 42(5):369--379,
	1971.
	
	\bibitem{Awo16}
	Abeeb~A Awotunde, Ryad~A Ghanam, Suliman~S Al-Homidan, and Nasser-eddine Tatar.
	\newblock Numerical schemes for anomalous diffusion of single-phase fluids in
	porous media.
	\newblock {\em Communications in Nonlinear Science and Numerical Simulation},
	39:381--395, 2016.
	
	\bibitem{Bal12}
	Dumitru Baleanu, Kai Diethelm, Enrico Scalas, and Juan~J Trujillo.
	\newblock {\em Fractional calculus: models and numerical methods}, volume~3.
	\newblock World Scientific, 2012.
	
	\bibitem{Bhr16}
	AH~Bhrawy.
	\newblock A jacobi spectral collocation method for solving multi-dimensional
	nonlinear fractional sub-diffusion equations.
	\newblock {\em Numerical Algorithms}, 73(1):91--113, 2016.
	
	\bibitem{Bru17}
	Hermann Brunner.
	\newblock {\em Volterra Integral Equations: An Introduction to Theory and
		Applications}, volume~30.
	\newblock Cambridge University Press, 2017.
	
	\bibitem{Buc97}
	Evelyn Buckwar.
	\newblock {\em Iterative Approximation of the Positive Solutions of a Class of
		Nonlinear Volterra-type Integral Equations}.
	\newblock Logos Verlag, 1997.
	
	\bibitem{Cor00}
	Evelyn Buckwar.
	\newblock On a nonlinear volterra integral equation.
	\newblock In {\em Volterra equations and applications}, pages 157--162. CRC
	Press, 2000.
	
	\bibitem{Bus76}
	PJ~Bushell.
	\newblock On a class of volterra and fredholm non-linear integral equations.
	\newblock In {\em Mathematical Proceedings of the Cambridge Philosophical
		Society}, volume~79, pages 329--335. Cambridge Univ Press, 1976.
	
	\bibitem{Cra87}
	John Crank.
	\newblock {\em Free and moving boundary problems}.
	\newblock Oxford University Press, 1987.
	
	\bibitem{Cus18}
	Nicole Cusimano, F{\'e}lix del Teso, Luca Gerardo-Giorda, and Gianni Pagnini.
	\newblock Discretizations of the spectral fractional laplacian on general
	domains with dirichlet, neumann, and robin boundary conditions.
	\newblock {\em SIAM Journal on Numerical Analysis}, 56(3):1243--1272, 2018.
	
	\bibitem{De06}
	Eduardo~N de~Azevedo, Paulo~L de~Sousa, Ricardo~E de~Souza, M~Engelsberg, Mirla
	de N do~N Miranda, and Maria~Aparecida Silva.
	\newblock Concentration-dependent diffusivity and anomalous diffusion: a
	magnetic resonance imaging study of water ingress in porous zeolite.
	\newblock {\em Physical Review E}, 73(1):011204, 2006.
	
	\bibitem{Ded05}
	Lj~Dedi{\'c}, M~Mati{\'c}, and J~Pe{\v{c}}ari{\'c}.
	\newblock On euler midpoint formulae.
	\newblock {\em The ANZIAM Journal}, 46(03):417--438, 2005.
	
	\bibitem{Del14}
	F{\'e}lix del Teso.
	\newblock Finite difference method for a fractional porous medium equation.
	\newblock {\em Calcolo}, 51(4):615--638, 2014.
	
	\bibitem{Die02}
	Kai Diethelm and Neville~J Ford.
	\newblock Analysis of fractional differential equations.
	\newblock {\em Journal of Mathematical Analysis and Applications},
	265(2):229--248, 2002.
	
	\bibitem{El04}
	Abd El-Ghany El~Abd and Jacek~J Milczarek.
	\newblock Neutron radiography study of water absorption in porous building
	materials: anomalous diffusion analysis.
	\newblock {\em Journal of Physics D: Applied Physics}, 37(16):2305, 2004.
	
	\bibitem{Erv07}
	Vincent~J Ervin, Norbert Heuer, and John~Paul Roop.
	\newblock Numerical approximation of a time dependent, nonlinear,
	space-fractional diffusion equation.
	\newblock {\em SIAM Journal on Numerical Analysis}, 45(2):572--591, 2007.
	
	\bibitem{Gas17}
	Francisco~J Gaspar and Carmen Rodrigo.
	\newblock Multigrid waveform relaxation for the time-fractional heat equation.
	\newblock {\em SIAM Journal on Scientific Computing}, 39(4):A1201--A1224, 2017.
	
	\bibitem{Gri81}
	Gustaf Gripenberg.
	\newblock Unique solutions of some volterra integral equations.
	\newblock {\em Mathematica Scandinavica}, 48(1):59--67, 1981.
	
	\bibitem{Hua04}
	Yanghong Huang and Adam Oberman.
	\newblock Numerical methods for the fractional laplacian: A finite
	difference-quadrature approach.
	\newblock {\em SIAM Journal on Numerical Analysis}, 52(6):3056--3084, 2014.
	
	\bibitem{Kir93}
	Virginia~S Kiryakova.
	\newblock {\em Generalized fractional calculus and applications}.
	\newblock CRC Press, 1993.
	
	\bibitem{Kir97}
	Virginia~S Kiryakova and Bader~N Al-Saqabi.
	\newblock Transmutation method for solving {E}rd{\'e}lyi-{K}ober fractional
	differintegral equations.
	\newblock {\em Journal of Mathematical Analysis and Applications},
	211(1):347--364, 1997.
	
	\bibitem{Kun01}
	Michel K{\"u}ntz and Paul Lavall{\'e}e.
	\newblock Experimental evidence and theoretical analysis of anomalous diffusion
	during water infiltration in porous building materials.
	\newblock {\em Journal of Physics D: Applied Physics}, 34(16):2547, 2001.
	
	\bibitem{Li09}
	Xianjuan Li and Chuanju Xu.
	\newblock A space-time spectral method for the time fractional diffusion
	equation.
	\newblock {\em SIAM Journal on Numerical Analysis}, 47(3):2108--2131, 2009.
	
	\bibitem{Lin85}
	Peter Linz.
	\newblock {\em Analytical and numerical methods for Volterra equations},
	volume~7.
	\newblock Siam, 1985.
	
	\bibitem{Mog17}
	Behrouz~Parsa Moghaddam and Jos{\'e} Ant{\'o}nio~Tenreiro Machado.
	\newblock A stable three-level explicit spline finite difference scheme for a
	class of nonlinear time variable order fractional partial differential
	equations.
	\newblock {\em Computers \& Mathematics with Applications}, 73(6):1262--1269,
	2017.
	
	\bibitem{Okr89}
	Wojciech Okrasi{\'n}ski.
	\newblock On a nonlinear ordinary differential equation.
	\newblock In {\em Annales Polonici Mathematici}, volume~3, pages 237--245,
	1989.
	
	\bibitem{Plo14}
	{\L}ukasz P{\l}ociniczak.
	\newblock Approximation of the erd\'elyi--kober operator with application to
	the time-fractional porous medium equation.
	\newblock {\em SIAM Journal on Applied Mathematics}, 74(4):1219--1237, 2014.
	
	\bibitem{Plo15}
	{\L}ukasz P{\l}ociniczak.
	\newblock Analytical studies of a time-fractional porous medium equation.
	derivation, approximation and applications.
	\newblock {\em Communications in Nonlinear Science and Numerical Simulation},
	24(1):169--183, 2015.
	
	\bibitem{Plo13}
	{\L}ukasz P{\l}ociniczak and Hanna Okrasi{\'n}ska.
	\newblock Approximate self-similar solutions to a nonlinear diffusion equation
	with time-fractional derivative.
	\newblock {\em Physica D: Nonlinear Phenomena}, 261:85--91, 2013.
	
	\bibitem{Plo17}
	{\L}ukasz P{\l}ociniczak and Hanna Okrasi{\'n}ska-P{\l}ociniczak.
	\newblock Finite difference method for a {V}olterra equation with a power-type
	nonlinearity.
	\newblock {\em arXiv:1705.03073}, 2017.
	
	\bibitem{Plo18}
	{\L}ukasz P{\l}ociniczak and Mateusz \'Swita{\l}a.
	\newblock Compactly supported solution of the time-fractional porous medium
	equation on the half-line.
	\newblock {\em preprint}.
	
	\bibitem{Plo17a}
	{\L}ukasz P{\l}ociniczak and Mateusz \'Swita{\l}a.
	\newblock Existence and uniqueness results for a time-fractional nonlinear
	diffusion equation.
	\newblock {\em preprint}.
	
	\bibitem{Ram08}
	NMM Ramos, JMPQ Delgado, and VP~De~Freitas.
	\newblock Anomalous diffusion during water absorption in porous building
	materials--experimental evidence.
	\newblock In {\em Defect and Diffusion Forum}, volume 273, pages 156--161.
	Trans Tech Publ, 2008.
	
	\bibitem{Sne75}
	Ian~Naismith Sneddon.
	\newblock The use in mathematical physics of {E}rd{\'e}lyi-{K}ober operators
	and of some of their generalizations.
	\newblock In {\em Fractional Calculus and its applications}, pages 37--79.
	Springer, 1975.
	
	\bibitem{Tad06}
	Charles Tadjeran, Mark~M Meerschaert, and Hans-Peter Scheffler.
	\newblock A second-order accurate numerical approximation for the fractional
	diffusion equation.
	\newblock {\em Journal of Computational Physics}, 213(1):205--213, 2006.
	
	\bibitem{Yus05}
	Santos~B Yuste and Luis Acedo.
	\newblock An explicit finite difference method and a new von neumann-type
	stability analysis for fractional diffusion equations.
	\newblock {\em SIAM Journal on Numerical Analysis}, 42(5):1862--1874, 2005.
	
	\bibitem{Zen14}
	Fanhai Zeng, Fawang Liu, Changpin Li, Kevin Burrage, Ian Turner, and Vo~Anh.
	\newblock A crank--nicolson adi spectral method for a two-dimensional riesz
	space fractional nonlinear reaction-diffusion equation.
	\newblock {\em SIAM Journal on Numerical Analysis}, 52(6):2599--2622, 2014.
	
	\bibitem{Zho17}
	Alexey~A Zhokh, Andrey~A Trypolskyi, and Peter~E Strizhak.
	\newblock Application of the time-fractional diffusion equation to methyl
	alcohol mass transfer in silica.
	\newblock In {\em Theory and Applications of Non-integer Order Systems}, pages
	501--510. Springer, 2017.
	
	\bibitem{Zhu09}
	Pinghui Zhuang, Fawang Liu, Vo~Anh, and I~Turner.
	\newblock Numerical methods for the variable-order fractional
	advection-diffusion equation with a nonlinear source term.
	\newblock {\em SIAM Journal on Numerical Analysis}, 47(3):1760--1781, 2009.
	
\end{thebibliography}

\end{document}